\theoremstyle{plain}
\newtheorem{theorem}{Theorem}[section]
\newtheorem{lemma}[theorem]{Lemma}
\newtheorem{corollary}[theorem]{Corollary}
\theoremstyle{definition}
\numberwithin{equation}{section}
\newcommand{\bnum}{\begin{enumerate}}
\newcommand{\enum}{\end{enumerate}}
\begin{document}

\title[Genus of commuting conjugacy class graph of  groups]{Genus of commuting conjugacy class graph of finite groups}
\author[P. Bhowal and R. K. Nath]{Parthajit Bhowal  and Rajat Kanti Nath$^*$}
\address{Department of Mathematical Sciences, Tezpur
University,  Napaam-784028, Sonitpur, Assam, India.
}
\email{bhowal.parthajit8@gmail.com, rajatkantinath@yahoo.com (corresponding author).}

\subjclass[2010]{20D99, 05C25, 05C10,   05E15.}
\keywords{commuting conjugacy class graph, genus, finite group.}

%

\thanks{*Corresponding author}

%
%
\begin{abstract}
For a non-abelian group $G$, its commuting conjugacy class graph $\mathcal{CCC}(G)$  is a simple undirected graph whose vertex set is the set of conjugacy classes of the non-central elements of $G$ and two distinct vertices $x^G$ and $y^G$ are adjacent if there exists some elements $x' \in x^G$ and $y' \in y^G$ such that $x'y' = y'x'$. 
In this paper we compute the genus of $\mathcal{CCC}(G)$ for six  well-known classes of non-abelian two-generated groups (viz. $D_{2n}, SD_{8n}, Q_{4m}, V_{8n}, U_{(n, m)}$ and $G(p, m, n)$) and determine whether $\mathcal{CCC}(G)$ for these groups are planar, toroidal, double-toroidal or triple-toroidal.   
\end{abstract}

\maketitle

%
%

\section{Introduction} \label{S:intro}

The commuting conjugacy class graph of a non-abelian group $G$ is a simple undirected  graph, denoted by $\mathcal{CCC}(G)$, whose vertex set is the set of conjugacy classes of the non-central elements of $G$ and two distinct vertices $x^G$ and $y^G$ are adjacent if there exists some elements $x' \in x^G$ and $y' \in y^G$ such that $x'y' = y'x'$.  This graph extends the notion of commuting graph of a finite group introduced by Brauer and Fowler \cite{bF1955}, in 1955. Commuting graphs of finite algebraic structures, its extensions, generalizations and their complements remain active topic of research over the years. In 2009, Herzog, Longobardi and Maj \cite{hLM2009} initiated the study of  commuting conjugacy class graph of groups. In 2016, finite groups having triangle-free commuting conjugacy class graph were characterized by Mohammadian et al. \cite{mefw}.   
Ashrafi and Salahshour  have also considered commuting conjugacy class graph of finite groups  in their recent work \cite{sA2020}, where they obtain  structures of $\mathcal{CCC}(G)$ for the following groups:
  
\begin{align*}
D_{2n} &= \langle \alpha, \beta : \alpha^n = \beta^2 = 1, \beta\alpha \beta = \alpha^{-1}\rangle \text{ for } n \geq 3,\\
SD_{8n} &= \langle \alpha, \beta : \alpha^{4n} = \beta^2 = 1, \beta\alpha \beta = \alpha^{2n -1}\rangle \text{ for } n \geq 2, \\
Q_{4m} &= \langle \alpha, \beta : \alpha^{2m} = 1, \alpha^m = \beta^2, \beta^{-1}\alpha \beta = \alpha^{-1}\rangle \text{ for } m \geq 2,\\
V_{8n} &= \langle \alpha, \beta : \alpha^{2n} = \beta^4 = 1, \beta\alpha = \alpha^{-1}\beta^{-1}, \beta^{-1}\alpha = \alpha^{-1}\beta \rangle \text{ for } n \geq 2,\\
U_{(n, m)} &= \langle \alpha, \beta :  \alpha^{2n} = \beta^m = 1, \alpha^{-1}\beta\alpha = \beta^{-1}\rangle \text{ for } m \geq 2 \text{ and } n \geq 2 \text{ and }\\
G(p, m, n) &= \langle \alpha, \beta : \alpha^{p^m} = \beta^{p^n} = [\alpha, \beta]^p = 1, [\alpha, [\alpha, \beta]] = [\beta, [\alpha, \beta]] = 1\rangle, 
\end{align*}
where $p$ is any prime,  $m \geq 1$ and $n \geq 1$.

Continuing the works of Ashrafi and Salahshour \cite{sA2020}, in \cite{bN-CCG-1-20,bN-CCG-2-20} Bhowal and Nath have obtained various spectra and energies of commuting conjugacy class graphs of finite groups.  
In this paper we compute genus of commuting conjugacy class graph of the above  mentioned groups and determine whether $\mathcal{CCC}(G)$ for those groups are planar, toroidal, double-toroidal or triple-toroidal. 
The genus $\gamma(\mathcal{G})$ of a graph $\mathcal{G}$ is the smallest  integer $k \geq 0$ such that $\mathcal{G}$ can be embedded on the surface obtained by attaching $k$ handles to a sphere. A graph $\mathcal{G}$ is called planar, toroidal, double-toroidal or triple-toroidal if $\mathcal{G}$ has genus $0,1,2$ or $3$ respectively. Results on genus of commuting graphs of finite groups, including  its various extensions,  can be found in \cite{AF14,bnN20,das13,das2015}. However, genus of commuting conjugacy class graph of finite groups are not yet studied. 

\section{Genus of $\mathcal{CCC}(G)$ and characterizations}
Let $K_n$ be the complete graph on $n$ vertices and $mK_n$ the disjoint union of $m$ copies of $K_n$. Then, by \cite[Theorem 6-38]{whit}, we have
\begin{equation} \label{kn}
\gamma(K_{n})=\left\lceil \frac{(n-3)(n-4)}{12}\right
\rceil, \text{ if } n \geq 3.
\end{equation}
By \cite[Corollary 2]{bhky}, we also have the following lemma.
\begin{lemma}\label{kmn}
If $\mathcal{G} = m_1K_{n_1} \sqcup m_2K_{n_2}$ then $\gamma(\mathcal{G}) = m_1\gamma(K_{n_1}) + m_2\gamma(K_{n_2})$.
\end{lemma}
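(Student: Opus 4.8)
The plan is to derive this directly from the additivity of the genus over connected components, which is precisely the content of \cite[Corollary 2]{bhky} (the classical result of Battle, Harary, Kodama and Youngs). That corollary tells us that if a graph $\mathcal{G}$ decomposes as a disjoint union of connected components $\mathcal{G}_1, \dots, \mathcal{G}_k$, then
\[
\gamma(\mathcal{G}) = \sum_{i=1}^{k} \gamma(\mathcal{G}_i).
\]
Intuitively this holds because an embedding of $\mathcal{G}$ on an orientable surface of minimal genus can be realized by embedding each component independently on its own minimal-genus surface and then forming the connected sum of these surfaces; conversely no component can be forced onto a surface of smaller genus than its own. All of this topological work is packaged inside the cited corollary, so I would simply quote it.

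With that in hand, the proof reduces to a bookkeeping step. First I would observe that each complete graph $K_{n_j}$ is connected, so the connected components of $\mathcal{G} = m_1 K_{n_1} \sqcup m_2 K_{n_2}$ are exactly the $m_1$ copies of $K_{n_1}$ together with the $m_2$ copies of $K_{n_2}$, giving $m_1 + m_2$ components in total. Since genus is an isomorphism invariant, every one of the $m_1$ copies of $K_{n_1}$ contributes $\gamma(K_{n_1})$ and every one of the $m_2$ copies contributes $\gamma(K_{n_2})$. Summing over all components via the additivity formula then yields
\[
\gamma(\mathcal{G}) = \sum_{i=1}^{m_1} \gamma(K_{n_1}) + \sum_{j=1}^{m_2} \gamma(K_{n_2}) = m_1\gamma(K_{n_1}) + m_2\gamma(K_{n_2}),
\]
which is the claimed identity.

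I do not expect any genuine obstacle here: the entire substance of the statement lies in the additivity of genus over components, and that is exactly what \cite[Corollary 2]{bhky} supplies. The only points requiring a word of care are that each $K_{n_j}$ is indeed connected (trivially true) and that the genus depends only on the isomorphism type of a component, so that identical copies each contribute the same amount. Once these are noted, the result is immediate, and the lemma will serve as the computational engine for the later genus calculations, where the graph $\mathcal{CCC}(G)$ for each of the six families is shown to be a disjoint union of complete graphs.
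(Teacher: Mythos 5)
Your proof is correct and follows exactly the route the paper takes: the paper simply derives the lemma by citing the additivity of genus over connected components from \cite[Corollary 2]{bhky}, which is precisely the Battle--Harary--Kodama--Youngs result you invoke. Your write-up merely makes explicit the (trivial) bookkeeping that the components of $m_1K_{n_1} \sqcup m_2K_{n_2}$ are $m_1$ copies of $K_{n_1}$ and $m_2$ copies of $K_{n_2}$, each contributing its own genus to the sum.
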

Now we compute genus of commuting conjugacy class graph of the groups $D_{2n}, SD_{8n}, Q_{4m}, V_{8n}$, $U_{(n, m)}$ and $G(p, m, n)$ one by one and check their planarity, toroidality etc.
\begin{theorem}
Let $G = D_{2n}$. Then
\begin{enumerate}
\item $\mathcal{CCC}(G)$ is planar if and only if $3\leq n \leq 10$.
\item $\mathcal{CCC}(G)$ is toroidal if and only if $11\leq n \leq 16$.
\item $\mathcal{CCC}(G)$ is double-toroidal if and only if $n = 17,18$.
\item $\mathcal{CCC}(G)$ is triple-toroidal if and only if $n = 19,20$.
\item $\gamma(\mathcal{CCC}(G))=\begin{cases}
\left\lceil\frac{(n-7)(n-9)}{48}\right\rceil, & \text{ if $n$ is odd and $n\geq 21$ } 
\vspace{.2cm}\\
\left\lceil\frac{(n - 8)(n - 10)}{48}\right\rceil, & \text{ if $n$ is even and $n\geq 22$. }
\end{cases}$
\end{enumerate}
\end{theorem}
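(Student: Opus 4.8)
The plan is to reduce the entire computation to the genus of a single complete graph. I expect $\mathcal{CCC}(D_{2n})$ to decompose as the disjoint union of one ``large'' complete graph arising from the rotation classes together with a couple of tiny, planar components arising from the reflection classes. So the first task is to pin down the vertex set and the adjacencies directly from the conjugacy class structure of $D_{2n}$, and the rest will follow from (\ref{kn}) and Lemma \ref{kmn}.

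For the rotation part I would note that the non-central powers of $\alpha$ split into classes $\{\alpha^i,\alpha^{-i}\}$, all lying in the abelian subgroup $\langle\alpha\rangle$; hence any two rotation classes have commuting representatives and together they induce a complete graph. There are $(n-1)/2$ such classes when $n$ is odd and $n/2-1$ when $n$ is even (since $\alpha^{n/2}$ becomes central). For the reflection part I would compute centralizers. When $n$ is odd all reflections form one class with centralizer $\{1,r\}$, so that vertex is isolated. When $n$ is even the reflections split into two classes and the centralizer of $\beta\alpha^{i}$ is $\{1,\alpha^{n/2},\beta\alpha^{i},\beta\alpha^{i+n/2}\}$; whether $\beta\alpha^{i}$ and $\beta\alpha^{i+n/2}$ lie in the same class is governed by the parity of $n/2$. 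This produces two subcases: for $n\equiv 2\pmod 4$ the two reflection classes are joined by a single edge $K_2$, while for $n\equiv 0\pmod 4$ they remain two isolated vertices. In all cases a reflection never commutes with a non-central rotation (the relation $\beta\alpha^i\cdot\alpha^j=\alpha^j\cdot\beta\alpha^i$ forces $\alpha^{2j}=1$), so no reflection class is adjacent to any rotation class.

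Assembling these facts, $\mathcal{CCC}(D_{2n})$ is $K_{(n-1)/2}\sqcup K_1$ for odd $n$, and either $K_{n/2-1}\sqcup K_2$ or $K_{n/2-1}\sqcup 2K_1$ for even $n$. Because the extra components are planar, Lemma \ref{kmn} gives $\gamma(\mathcal{CCC}(D_{2n}))=\gamma(K_{(n-1)/2})$ in the odd case and $\gamma(K_{n/2-1})$ in the even case. Substituting the orders $(n-1)/2$ and $n/2-1$ into (\ref{kn}) and simplifying the ceilings (the factor $2\cdot 2=4$ in the denominators turning the $12$ into $48$) yields exactly the two expressions claimed in part (e). Parts (a)--(d) then follow by tabulating $\gamma(K_m)$ for the small relevant values $m=(n-1)/2$ and $m=n/2-1$ and locating the thresholds at which the genus first reaches $1,2,3$, and checking that the odd and even ranges interleave to give the stated intervals.

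The only conceptual step where anything could genuinely go wrong is the centralizer computation distinguishing $n\equiv 0$ from $n\equiv 2\pmod 4$, which decides whether the two reflection classes merge into an edge or stay isolated; fortunately both outcomes are planar, so this subtlety never affects the genus. The remaining care is purely bookkeeping: tracking the ceiling function near the thresholds in (a)--(d) and handling the very small orders $m\le 2$ (where the ``complete graph'' is a point or an edge and contributes nothing) so that the boundary cases $n=3,4$ are covered by the planar range.
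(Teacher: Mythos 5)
Your proposal is correct and takes essentially the same route as the paper: decompose $\mathcal{CCC}(D_{2n})$ into a disjoint union of complete graphs, apply Lemma \ref{kmn} and \eqref{kn} to reduce to $\gamma\bigl(K_{(n-1)/2}\bigr)$ or $\gamma\bigl(K_{n/2-1}\bigr)$, and tabulate the small cases to locate the thresholds. The only difference is that you derive the decomposition (including the $n\bmod 4$ subcases for the reflection classes) directly from centralizers, whereas the paper simply quotes \cite[Proposition 2.1]{sA2020}; your computation agrees with that proposition.
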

\begin{proof}
Consider the following cases.

\noindent \textbf{Case 1.} $n$ is odd.

 By \cite[Proposition 2.1]{sA2020} we have $\mathcal{CCC}(G) = K_1 \sqcup K_{\frac{n - 1}{2}}$. Therefore, for $n = 3$ and $5$, it follows that $\mathcal{CCC}(G)= 2K_1$, $K_1\sqcup K_2$ respectively; and hence $\mathcal{CCC}(G)$ is planar. If $n \geq 7$ then, by Lemma \ref{kmn} and \eqref{kn}, we have 
 \[
 \gamma(\mathcal{CCC}(G)) = \gamma(K_{\frac{n - 1}{2}}) =  \left\lceil\frac{(n-7)(n-9)}{48}\right\rceil.
 \]
Clearly $\gamma(\mathcal{CCC}(G)) = 0$ if and only if $n=7$ or $9$. Also, $\gamma(\mathcal{CCC}(G)) = 1$ if $n = 11$, $13$ or $15$; $\gamma(\mathcal{CCC}(G)) = 2$ if $n = 17$; $\gamma(\mathcal{CCC}(G)) = 3$ if $n = 19$. For $n\geq 21$ we have 
\[
\frac{(n-7)(n-9)}{48} \geq \frac{7}{2} = 3.5,
\]
and so 
 \[
 \gamma(\mathcal{CCC}(G)) =  \left\lceil\frac{(n-7)(n-9)}{48}\right\rceil \geq 4.
 \]
 Thus, $\mathcal{CCC}(G)$ is planar if and only if $n=3,5,7,9$; toroidal if and only if $n=11,13,15$; double-toroidal if and only if $n=17$ and triple-toroidal if and only if $n=19$.
 
 \noindent \textbf{Case 2.} $n$ is even.

 By \cite[Proposition 2.1]{sA2020} we have
 \[
 \mathcal{CCC}(G) =  \begin{cases}
 2K_1 \sqcup K_{\frac{n}{2} - 1}, & \text{ if $n$ and $\frac{n}{2}$ are even}\\
  K_2 \sqcup K_{\frac{n}{2} - 1}, & \text{ if $n$ is even and $\frac{n}{2}$ is odd.}
\end{cases}
 \] 
 Therefore, for $n = 4$ and $6$, it follows that $\mathcal{CCC}(G) = 3K_1$, $2K_2$ respectively; and hence $\mathcal{CCC}(G)$ is planar. If $n\geq 8$ then, by Lemma \ref{kmn} and \eqref{kn}, we have 
  \[
 \gamma(\mathcal{CCC}(G)) = \gamma(K_{\frac{n}{2} - 1}) =  \left\lceil\frac{(n - 8)(n - 10)}{48}\right\rceil.
 \]
 Clearly $\gamma(\mathcal{CCC}(G)) = 0$ if and only if $n=8$ or $10$. Also, $\gamma(\mathcal{CCC}(G)) = 1$ if $n = 12$, $14$ or $16$; $\gamma(\mathcal{CCC}(G)) = 2$ if $n = 18$; $\gamma(\mathcal{CCC}(G)) = 3$ if $n = 20$. For $n\geq 22$ we have 
\[
\frac{(n-8)(n-10)}{48} \geq \frac{7}{2} = 3.5,
\]
and so 
 \[
 \gamma(\mathcal{CCC}(G)) =  \left\lceil\frac{(n-8)(n-10)}{48}\right\rceil \geq 4.
 \]
 Thus, $\mathcal{CCC}(G)$ is planar if and only if $n=4,6,8,10$; toroidal if and only if $n=12,14,16$; double-toroidal if and only if $n=18$ and triple-toroidal if and only if $n=20$. Hence the result follows.
 \end{proof}
\begin{theorem} 
Let $G=SD_{8n}$. Then
\begin{enumerate}
\item $\mathcal{CCC}(G)$ is planar if and only if $n= 2$ or $3$.
\item $\mathcal{CCC}(G)$ is toroidal if and only if $n=4$.
\item $\mathcal{CCC}(G)$ is double-toroidal if and only if $n=5$.
\item $\mathcal{CCC}(G)$ is not triple-toroidal.
\item $\gamma(\mathcal{CCC}(G))=\begin{cases}
\left\lceil\frac{(n - 3)(2n - 5)}{6}\right\rceil, & \text{ if $n$ is odd and $n\geq 7$ }
\vspace{.2cm}\\
\left\lceil\frac{(n - 2)(2n - 5)}{6}\right\rceil, & \text{ if $n$ is even and $n\geq 6$. }
\end{cases}$
\end{enumerate}
\end{theorem}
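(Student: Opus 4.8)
The plan is to follow exactly the template of the preceding $D_{2n}$ argument. First I would invoke the structural description of $\mathcal{CCC}(SD_{8n})$ from \cite{sA2020}, which (as in the dihedral case) presents the graph as a disjoint union of complete graphs whose shape depends on the parity of $n$. Reading off the two target formulas in part (5), I expect this decomposition to contain a single complete graph of positive genus --- namely $K_{2n-1}$ when $n$ is even and $K_{2n-2}$ when $n$ is odd --- accompanied only by small complete graphs ($K_1$, $K_2$, and the like) that are planar and hence contribute nothing. Accordingly I would organize the proof into two cases by the parity of $n$, mirroring Case 1 and Case 2 above.

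In each case I would then apply Lemma \ref{kmn} to collapse $\gamma(\mathcal{CCC}(G))$ to the genus of the unique large component, and use \eqref{kn}. The crux of this step is the arithmetic simplification of the complete-graph genus formula. For $n$ even,
\[
\gamma(K_{2n-1}) = \left\lceil\frac{(2n-4)(2n-5)}{12}\right\rceil = \left\lceil\frac{(n-2)(2n-5)}{6}\right\rceil,
\]
and for $n$ odd,
\[
\gamma(K_{2n-2}) = \left\lceil\frac{(2n-5)(2n-6)}{12}\right\rceil = \left\lceil\frac{(n-3)(2n-5)}{6}\right\rceil,
\]
which are precisely the two expressions in part (5). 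Prior to invoking Lemma \ref{kmn} I must confirm that every remaining component really is planar for all $n\geq 2$, so that only the single displayed term survives; this is where I would lean on the exact component sizes recorded in \cite{sA2020}.

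Finally, parts (1)--(4) follow by evaluating at small $n$ together with a threshold analysis. Computing directly gives genus $0$ for $n=2,3$ (the large component is $K_3$, resp.\ $K_4$), genus $1$ for $n=4$ (from $K_7$), and genus $2$ for $n=5$ (from $K_8$). The main obstacle, and the whole point of part (4), is that the genus jumps directly from $2$ at $n=5$ to $5$ at $n=6$, since $\left\lceil\frac{(6-2)(7)}{6}\right\rceil=\left\lceil\frac{28}{6}\right\rceil=5$, skipping the value $3$ altogether; no $n$ yields genus exactly $3$, so $\mathcal{CCC}(SD_{8n})$ is never triple-toroidal. I would close by checking that both quadratics $(n-2)(2n-5)$ and $(n-3)(2n-5)$ are increasing for $n\geq 6$, so that $\gamma(\mathcal{CCC}(G))\geq 5 > 3$ persists, confirming that the listed small cases are the only ones with genus at most $3$.
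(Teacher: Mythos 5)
Your proposal is correct and follows essentially the same route as the paper: it cites the decomposition of $\mathcal{CCC}(SD_{8n})$ from \cite{sA2020} (which is $K_4 \sqcup K_{2n-2}$ for $n$ odd and $2K_1 \sqcup K_{2n-1}$ for $n$ even, so the extra component is indeed always planar), applies Lemma \ref{kmn} and \eqref{kn} to reduce to the single large complete graph, and handles the small cases and the jump from genus $2$ at $n=5$ to genus $\geq 5$ for $n\geq 6$ exactly as the paper does. No gaps.
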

\begin{proof}
Consider the following cases.

\noindent \textbf{Case 1.} $n$ is odd.

By \cite[Proposition 2.1]{sA2020} we have $\mathcal{CCC}(G) = K_4 \sqcup K_{2n - 2}$. For $n \geq 3$, by Lemma \ref{kmn} and \eqref{kn}, we have 
 \[
 \gamma(\mathcal{CCC}(G)) = \gamma(K_{4}) + \gamma(K_{2n - 2}) =  \left\lceil\frac{(n-3)(2n-5)}{6}\right\rceil.
 \]
Clearly $\gamma(\mathcal{CCC}(G)) = 0$ if $n = 3$; $\gamma(\mathcal{CCC}(G)) = 2$ if $n = 5$. For $n\geq 7$ we have 
\[
\frac{(n-3)(2n-5)}{6} \geq 6,
\]
and so 
 \[
 \gamma(\mathcal{CCC}(G)) =  \left\lceil\frac{(n-3)(2n-5)}{6}\right\rceil \geq 6.
 \]
Thus $\mathcal{CCC}(G)$ is planar if and only if $n=3$; double-toroidal if and only if $n=5$.
 
 \noindent \textbf{Case 2.} $n$ is even.

By \cite[Proposition 2.1]{sA2020} we have $\mathcal{CCC}(G) = 2K_1 \sqcup K_{2n - 1}$. For $n \geq 2$, by Lemma \ref{kmn} and \eqref{kn}, we have 
 \[
 \gamma(\mathcal{CCC}(G)) = \gamma(K_{2n - 1}) =  \left\lceil\frac{(n-2)(2n-5)}{6}\right\rceil.
 \]
Clearly $\gamma(\mathcal{CCC}(G)) = 0$ if $n = 2$; $\gamma(\mathcal{CCC}(G)) = 1$ if $n = 4$. For $n\geq 6$ we have 
\[
\frac{(n-2)(2n-5)}{6} \geq \frac{14}{3},
\]
and so 
 \[
  \gamma(\mathcal{CCC}(G)) = \left\lceil\frac{(n-2)(2n-5)}{6}\right\rceil \geq 5.
 \]
Thus $\mathcal{CCC}(G)$ is planar if and only if $n=2$; toroidal if and only if $n=4$. Hence the result follows. 
 \end{proof}
 
\begin{theorem}
Let $G= Q_{4m}$. Then
\begin{enumerate}
\item $\mathcal{CCC}(G)$ is planar if and only if $m= 2,3,4$ or $5$.
\item $\mathcal{CCC}(G)$ is toroidal if and only if $m= 6,7$ or $8$.
\item $\mathcal{CCC}(G)$ is double-toroidal if and only if $m=9$.
\item $\mathcal{CCC}(G)$ is triple-toroidal if and only if $m=10$.
\item $\gamma(\mathcal{CCC}(G)) =  \left\lceil\frac{(m - 4)(m - 5)}{12}\right\rceil$ for $m\geq 11$.
\end{enumerate}
\end{theorem}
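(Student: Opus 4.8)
The plan is to follow the template of the two preceding theorems: read off the structure of $\mathcal{CCC}(G)$ from \cite[Proposition 2.1]{sA2020}, discard the planar components, and apply Lemma \ref{kmn} together with \eqref{kn} to reduce the genus to that of a single complete graph. To set up that structure one checks from the presentation that $Z(G)=\{1,\alpha^m\}$, that the non-central powers of $\alpha$ fall into the $m-1$ conjugacy classes $\{\alpha^i,\alpha^{-i}\}$ ($1\le i\le m-1$), which pairwise commute and hence span a $K_{m-1}$, and that the elements $\alpha^i\beta$ split into two classes (even and odd exponents) whose mutual adjacency is governed by the parity of $m$ via the relation $\alpha^i\beta\cdot\alpha^{i+m}\beta=\alpha^{i+m}\beta\cdot\alpha^i\beta$. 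This is the content I would expect \cite[Proposition 2.1]{sA2020} to supply, namely $\mathcal{CCC}(G)=K_{m-1}\sqcup K_2$ for odd $m$ and $\mathcal{CCC}(G)=K_{m-1}\sqcup 2K_1$ for even $m$.

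The crucial point, and the reason part (5) requires only one formula in contrast to the earlier theorems, is that in both parities the unique component of positive genus is $K_{m-1}$, since $K_2$ and $K_1$ are planar. Hence Lemma \ref{kmn} and \eqref{kn} give, uniformly for all $m\ge 4$ (so that $m-1\ge 3$),
\[
\gamma(\mathcal{CCC}(G))=\gamma(K_{m-1})=\left\lceil\frac{(m-4)(m-5)}{12}\right\rceil,
\]
while for $m=2,3$ the graph is $3K_1$ or $2K_2$ and is therefore planar. Parts (1)--(4) then follow by tabulating this ceiling: it is $0$ exactly for $m\in\{2,3,4,5\}$ (where $m-1\le 4$), equals $1$ for $m\in\{6,7,8\}$ (as $\gamma(K_5)=\gamma(K_6)=\gamma(K_7)=1$), equals $2$ at $m=9$ (as $\gamma(K_8)=2$), and equals $3$ at $m=10$ (as $\gamma(K_9)=3$); for $m\ge 11$ one has $\frac{(m-4)(m-5)}{12}\ge\frac{7\cdot 6}{12}=3.5$, forcing genus at least $4$.

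I expect no genuine obstacle once the structure from \cite{sA2020} is granted, since the remainder is entirely mechanical. The only points requiring a little care are the boundary cases: confirming that the auxiliary components $K_2$ and $2K_1$ are planar so that they contribute nothing to the genus, and verifying that the ceiling in \eqref{kn} lands precisely on $2$ and $3$ at $m=9$ and $m=10$ rather than spilling over into the next value, both of which are immediate from \eqref{kn}.
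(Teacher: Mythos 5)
Your proposal is correct and follows essentially the same route as the paper: both read off $\mathcal{CCC}(Q_{4m}) = K_2 \sqcup K_{m-1}$ (for $m$ odd) or $2K_1 \sqcup K_{m-1}$ (for $m$ even) from \cite[Proposition 2.1]{sA2020}, note that only the $K_{m-1}$ component contributes to the genus, and apply Lemma \ref{kmn} with \eqref{kn} to get $\gamma(\mathcal{CCC}(G)) = \left\lceil\frac{(m-4)(m-5)}{12}\right\rceil$ for $m \geq 4$, finishing by the same case tabulation and the bound $\frac{(m-4)(m-5)}{12} \geq 3.5$ for $m \geq 11$. The only difference is your optional sketch of where the decomposition comes from, which the paper simply cites.
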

\begin{proof}
 By \cite[Proposition 2.1]{sA2020} we have
 \[
 \mathcal{CCC}(G) =  \begin{cases}
 K_2 \sqcup K_{m - 1}, & \text{ if $m$ is odd}\\
  2K_1 \sqcup K_{m - 1}, & \text{ if $m$ is even.}
\end{cases}
 \] 
 Therefore, for $m = 2, 3$, it follows that $\mathcal{CCC}(G) = 3K_1$, $2K_2$ respectively; and hence $\mathcal{CCC}(G)$ is planar. If $m\geq 4$ then, by Lemma \ref{kmn} and \eqref{kn}, we have 
  \[
 \gamma(\mathcal{CCC}(G)) = \gamma(K_{m - 1}) =  \left\lceil\frac{(m - 4)(m - 5)}{12}\right\rceil.
 \]
 Clearly $\gamma(\mathcal{CCC}(G)) = 0$ if and only if $m=4$ or $5$. Also, $\gamma(\mathcal{CCC}(G)) = 1$ if $m = 6$, $7$ or $8$; $\gamma(\mathcal{CCC}(G)) = 2$ if $m = 9$; $\gamma(\mathcal{CCC}(G)) = 3$ if $m = 10$. For $m\geq 11$ we have 
\[
\frac{(m - 4)(m - 5)}{12} \geq \frac{7}{2} = 3.5,
\]
and so 
 \[
 \gamma(\mathcal{CCC}(G)) =  \left\lceil\frac{(m - 4)(m - 5)}{12}\right\rceil \geq 4.
 \]
 Thus, $\mathcal{CCC}(G)$ is planar if and only if $m = 2,3,4,5$; toroidal if and only if $m = 6,7,8$; double-toroidal if and only if $m=9$ and triple-toroidal if and only if $m=10$. Hence the result follows.
\end{proof}
\begin{theorem}
Let $G=V_{8n}$. Then
\begin{enumerate}
\item $\mathcal{CCC}(G)$ is planar if and only if $n= 2$.
\item $\mathcal{CCC}(G)$ is toroidal if and only if $n=3$ or $4$.
\item $\mathcal{CCC}(G)$ is not double-toroidal.
\item $\mathcal{CCC}(G)$ is triple-toroidal if and only if $n= 5$.
\item $\gamma(\mathcal{CCC}(G))=\begin{cases}
\left\lceil\frac{(n - 2)(2n - 5)}{6}\right\rceil, & \text{ if $n$ is odd and $n\geq 7$ }
\vspace{.2cm}\\
\left\lceil\frac{(n - 3)(2n - 5)}{6}\right\rceil, & \text{ if $n$ is even and $n\geq 6$. }
\end{cases}$
\end{enumerate}
\end{theorem}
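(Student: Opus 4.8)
The plan is to follow verbatim the template already used for $D_{2n}$, $SD_{8n}$ and $Q_{4m}$: read off the structure of $\mathcal{CCC}(G)$ from \cite[Proposition 2.1]{sA2020}, collapse the genus computation to that of a single complete graph using Lemma \ref{kmn} and \eqref{kn}, and then decide planarity, toroidality and so on from the arithmetic of the resulting ceiling. I would split into two cases according to the parity of $n$. In each case \cite[Proposition 2.1]{sA2020} writes $\mathcal{CCC}(V_{8n})$ as a disjoint union of complete graphs in which the only component of positive genus is $K_{2n-1}$ when $n$ is odd and $K_{2n-2}$ when $n$ is even; every other component is a copy of a small complete graph ($K_1$ or $K_2$), hence planar, and by Lemma \ref{kmn} contributes nothing to the genus.

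With the irrelevant components discarded, the genus reduces to a single term, and the whole of part (e) is just an algebraic simplification of \eqref{kn}. For $n$ odd I would compute
\[
\gamma(\mathcal{CCC}(G)) = \gamma(K_{2n-1}) = \left\lceil \frac{(2n-4)(2n-5)}{12} \right\rceil = \left\lceil \frac{(n-2)(2n-5)}{6} \right\rceil,
\]
and for $n$ even
\[
\gamma(\mathcal{CCC}(G)) = \gamma(K_{2n-2}) = \left\lceil \frac{(2n-5)(2n-6)}{12} \right\rceil = \left\lceil \frac{(n-3)(2n-5)}{6} \right\rceil,
\]
which are exactly the two branches claimed. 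The only manipulation is pulling a factor of $2$ out of one numerator term so that the denominator $12$ becomes $6$; this is routine.

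Parts (a)--(d) then follow by tabulating small values. On the odd branch one gets $\gamma=1$ at $n=3$ (so $K_5$ is toroidal) and $\gamma=3$ at $n=5$ (so $K_9$ is triple-toroidal); on the even branch $\gamma=0$ at $n=2$ (planar, since the governing component is just $K_2$) and $\gamma=1$ at $n=4$ (so $K_6$ is toroidal). To fix the upper thresholds I would mirror the earlier arguments and verify $\frac{(n-2)(2n-5)}{6}\geq 3.5$ for odd $n\geq 7$ and $\frac{(n-3)(2n-5)}{6}\geq 3.5$ for even $n\geq 6$, which force $\gamma\geq 4$ beyond those points (indeed $\gamma=8$ already at $n=7$ and $\gamma=4$ already at $n=6$).

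The one statement that is not a mere table lookup is part (c), that $\mathcal{CCC}(G)$ is never double-toroidal, and this is the point I expect to require the most care. It should fall out of the observation that, as $n$ grows, the genus skips the value $2$ outright: on the odd branch it jumps from $1$ (at $n=3$) straight to $3$ (at $n=5$), and on the even branch it jumps from $1$ (at $n=4$) all the way to $4$ (at $n=6$), so that \emph{no} value of $n$ produces genus $2$. Checking these few consecutive ceilings carefully is the crux; once it is done, combining the two parity cases yields the full theorem.
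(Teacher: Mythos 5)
Your proposal is correct and follows essentially the same route as the paper: split on the parity of $n$, use \cite[Proposition 2.1]{sA2020} to reduce via Lemma \ref{kmn} to $\gamma(K_{2n-1})$ (odd $n$) or $\gamma(K_{2n-2})$ (even $n$), simplify \eqref{kn} to the two stated ceilings, tabulate $n\le 5$, and bound the expression below by $3.5$ beyond the thresholds to rule out genus $2$ and $3$ for larger $n$. The small-case values you quote ($\gamma=1$ at $n=3,4$, $\gamma=3$ at $n=5$, $\gamma=0$ at $n=2$ where the large component degenerates to $K_2$) all match the paper's.
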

\begin{proof}
Consider the following cases.

\noindent \textbf{Case 1.} $n$ is odd.

By \cite[Proposition 2.1]{sA2020} we have $\mathcal{CCC}(G) = 2K_1 \sqcup K_{2n - 1}$. For $n \geq 3$, by Lemma \ref{kmn} and \eqref{kn}, we have 
 \[
 \gamma(\mathcal{CCC}(G)) = \gamma(K_{2n - 1}) =  \left\lceil\frac{(n-2)(2n-5)}{6}\right\rceil.
 \]
Clearly $\gamma(\mathcal{CCC}(G)) = 1$ if $n = 3$; $\gamma(\mathcal{CCC}(G)) = 3$ if $n = 5$. For $n\geq 7$ we have 
\[
\frac{(n-2)(2n-5)}{6} \geq \frac{15}{2} = 7.5,
\]
and so 
 \[
 \gamma(\mathcal{CCC}(G)) =  \left\lceil\frac{(n-2)(2n-5)}{6}\right\rceil \geq 8.
 \]

 \noindent \textbf{Case 2.} $n$ is even.

By \cite[Proposition 2.1]{sA2020} we have $\mathcal{CCC}(G) = 2K_2 \sqcup K_{2n - 2}$. Therefore, for $n = 2$ it follows that $\mathcal{CCC}(G) = 3K_2$; and hence $\mathcal{CCC}(G)$ is planar. If $n \geq 4$ then, by Lemma \ref{kmn} and \eqref{kn}, we have 
 \[
 \gamma(\mathcal{CCC}(G)) = \gamma(K_{2n - 2}) =  \left\lceil\frac{(n-3)(2n-5)}{6}\right\rceil.
 \]
Clearly $\gamma(\mathcal{CCC}(G)) = 1$ if $n = 4$. For $n\geq 6$ we have 
\[
\frac{(n-3)(2n-5)}{6} \geq \frac{7}{2} = 3.5,
\]
and so 
 \[
  \gamma(\mathcal{CCC}(G)) = \left\lceil\frac{(n-3)(2n-5)}{6}\right\rceil \geq 4.
 \]
 Thus $\mathcal{CCC}(G)$ is planar if and only if $n=2$; toroidal if and only if $n=4$. Hence the result follows.
\end{proof}

\begin{theorem}
Let $G= U_{(n,m)}$. Then
\begin{enumerate}
\item $\mathcal{CCC}(G)$ is planar if and only if $n = 2$ and $m = 2, 3, 4, 5, 6$; $n = 3$ and $m = 2, 3, 4$; or $n = 4$ and $m = 2, 3, 4$.
\item $\mathcal{CCC}(G)$ is toroidal if and only if $n = 2$ and  $m = 7, 8$; or $n = 3$ and $m = 5, 6$.
\item $\mathcal{CCC}(G)$ is double-toroidal if and only if $n = 2$ and $m = 9, 10$; $n = 4$ and $m = 5, 6$; $n = 5$ and $m = 2, 3$; $n = 6$ and $m = 2, 3$; or $n = 7$ and $m = 2, 3$.
\item $\mathcal{CCC}(G)$ is triple-toroidal if and only if $n = 3$ and $m = 7, 8$; $n = 5$ and $m = 4$; $n = 6$ and $m = 4$; or $n = 7$ and $m = 4$.
\item $\gamma(\mathcal{CCC}(G)) = \begin{cases}
\left\lceil\frac{(mn - n - 6)(mn - n - 8)}{48}\right\rceil, & \text{if $n=2$, $m$ is odd and $m \geq 11$} 
\vspace{.2cm}\\
\left\lceil\frac{(mn - 2n - 6)(mn - 2n - 8)}{48}\right\rceil, & \text{if $n=2$, $m$ is even and $m \geq 12$}
\vspace{.2cm}\\
\left\lceil\frac{(mn - n - 6)(mn - n - 8)}{48}\right\rceil + \left\lceil\frac{(n - 3)(n - 4)}{12}\right\rceil, & \text{if $n = 3$, $m$ is odd and $m\geq 9$};\\
& n = 4, m\geq 7; n = 5, m\geq 5;\\
& n = 6,m\geq 5;  n = 7, m\geq 5;\\
& \text{ or } n \geq 8, m\geq 3 \\
\left\lceil\frac{(mn - 2n - 6)(mn - 2n - 8)}{48}\right\rceil + 2  \left\lceil\frac{(n - 3)(n - 4)}{12}\right\rceil, &\text{if $n = 3$,  $m$ is even and $m\geq 10$};\\
& n = 4, m\geq 8; n = 5, m\geq 6;\\
& n = 6,m\geq 6;  n = 7, m\geq 6;\\
&\text{ or } n \geq 8, m\geq 2
\end{cases}$
\end{enumerate}
\end{theorem}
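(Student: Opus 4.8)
The plan is to begin from the known structure of the graph and reduce the entire statement to arithmetic with the genus function of complete graphs. By \cite[Proposition 2.1]{sA2020}, the commuting conjugacy class graph of $G=U_{(n,m)}$ is a disjoint union of complete graphs, namely
\[
\mathcal{CCC}(G)=\begin{cases}
K_n \sqcup K_{n(m-1)/2}, & m \text{ odd},\\
2K_n \sqcup K_{n(m-2)/2}, & m \text{ even}.
\end{cases}
\]
Applying Lemma \ref{kmn} then gives $\gamma(\mathcal{CCC}(G))=\gamma(K_n)+\gamma(K_{n(m-1)/2})$ when $m$ is odd, and $\gamma(\mathcal{CCC}(G))=2\gamma(K_n)+\gamma(K_{n(m-2)/2})$ when $m$ is even. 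This is the backbone of the argument: the whole theorem is now controlled by the single function $r\mapsto\gamma(K_r)$.

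Next I would convert the complete-graph genera into the closed forms of part (e). Writing $N=n(m-1)/2$ (for $m$ odd) and $M=n(m-2)/2$ (for $m$ even) and substituting into \eqref{kn}, a one-line computation using $2N=mn-n$ and $2M=mn-2n$ gives $\gamma(K_N)=\lceil (mn-n-6)(mn-n-8)/48\rceil$ and $\gamma(K_M)=\lceil (mn-2n-6)(mn-2n-8)/48\rceil$, valid whenever $N,M\ge 3$. The other ingredient is $\gamma(K_n)=\lceil (n-3)(n-4)/12\rceil$ for $n\ge 3$, while $\gamma(K_n)=0$ for $n=2$ since \eqref{kn} only applies for $n\ge 3$. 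This is precisely why the statement isolates $n=2$: there the $\gamma(K_n)$-summands vanish, leaving the single-ceiling expressions of cases $1$ and $2$, whereas for $n\ge 3$ one keeps the additive term $\lceil (n-3)(n-4)/12\rceil$, once for $m$ odd and twice for $m$ even, producing cases $3$ and $4$.

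The remaining work is the threshold analysis for parts (a)--(d). The key observation is that $\gamma(K_r)=0$ for $r\le 4$, $\gamma(K_r)=1$ for $r\in\{5,6,7\}$, $\gamma(K_r)=2$ for $r=8$, $\gamma(K_r)=3$ for $r=9$, and $\gamma(K_r)\ge 4$ for $r\ge 10$. Substituting $r=n$, $r=n(m-1)/2$, and $r=n(m-2)/2$ into these bands and summing according to the displayed formula reduces each of (a)--(d) to an elementary inequality in $m$ for every small fixed $n$. For example, planarity forces each component to have genus $0$, i.e. $n\le 4$ together with $n(m-1)/2\le 4$ (for $m$ odd) or $n(m-2)/2\le 4$ (for $m$ even); running $n\in\{2,3,4\}$ through these bounds yields exactly the list in (a). The toroidal, double-toroidal and triple-toroidal cases are settled identically, by reading off which $(n,m)$ make the component-genus sum equal to $1$, $2$, or $3$.

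The main obstacle, and the only genuinely delicate point, is bookkeeping the two-parameter casework so that (a)--(e) partition all admissible $(n,m)$ with no gaps or overlaps. Two features demand care. First, because $\gamma(K_r)$ jumps from $3$ at $r=9$ directly to $4$ at $r=10$, while the indices $n(m-1)/2$ and $n(m-2)/2$ advance in steps of size $n\ge 2$ as $m$ increases by $2$, certain genus values are skipped for small fixed $n$; this is exactly what makes $n=4$ never toroidal or triple-toroidal and $n=2$ never triple-toroidal, and one must verify that these omissions match the statement. Second, for each $(n,m)$ listed in case $3$ or case $4$ of part (e) one must check that the threshold on $m$ is sharp, namely that the next smaller admissible $m$ already falls under (a)--(d) while the stated range gives genus $\ge 4$; since the component-genus sum is monotone in $m$, this is a finite verification at each boundary, and the catch-all $n\ge 8$ follows because $\gamma(K_n)\ge 2$ then forces genus $\ge 4$ for every admissible $m$ (for $m$ even directly from the two copies of $K_n$, and for $m$ odd because the second complete graph also has at least $8$ vertices).
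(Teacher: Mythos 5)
Your proposal is correct and follows essentially the same route as the paper: the decomposition of $\mathcal{CCC}(U_{(n,m)})$ into complete graphs (the paper cites \cite[Proposition 2.3]{sA2020}, not 2.1, for this group), additivity of genus via Lemma \ref{kmn}, substitution into \eqref{kn}, and a band-by-band threshold analysis in $(n,m)$. Your explicit observation that $\gamma(K_r)$ takes values $0,1,2,3$ exactly for $r\le 4$, $r\in\{5,6,7\}$, $r=8$, $r=9$ cleanly explains the "skipped" cases (e.g.\ $n=4$ never toroidal), which the paper handles by direct enumeration.
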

\begin{proof}
Consider the following cases.

\noindent \textbf{Case 1.} $m$ is odd.

By \cite[Proposition 2.3]{sA2020} we have $\mathcal{CCC}(G) =   K_{\frac{n(m - 1)}{2}} \sqcup K_n$.

\noindent \textbf{Sub case 1.1} $n=2.$

 If $n=2$ then we have $\mathcal{CCC}(G) =   K_{m - 1} \sqcup K_2$. Therefore, for $m = 3$ it follows that $\mathcal{CCC}(G) = 2K_2$; and hence $\mathcal{CCC}(G)$ is planar. For $m\geq 5$, by Lemma \ref{kmn}, we have 
 \[
 \gamma(\mathcal{CCC}(G)) = \gamma(K_{m - 1}) = \left\lceil\frac{(m - 4)(m - 5)}{12}\right\rceil.
 \]
Clearly $\gamma(\mathcal{CCC}(G)) = 0$ if $m = 5$; $\gamma(\mathcal{CCC}(G)) = 1$ if $m = 7$; $\gamma(\mathcal{CCC}(G)) = 2$ if $m = 9$. For $m\geq 11$ we have 
\[
\frac{(m - 4)(m - 5)}{12} \geq \frac{7}{2} = 3.5,
\]
and so 
 \[
  \gamma(\mathcal{CCC}(G)) = \left\lceil\frac{(m - 4)(m - 5)}{12}\right\rceil \geq 4.
 \]
Thus $\mathcal{CCC}(G)$ is planar if and only if $m = 3,5$; toroidal if and only if $m=7$; double-toroidal if and only if $m=9$.

\noindent \textbf{Sub case 1.2} $n\geq 3.$

 If $n\geq 3$ then we have $\mathcal{CCC}(G) =   K_{\frac{n(m - 1)}{2}} \sqcup K_n$. By Lemma \ref{kmn}, we have 
 \[
 \gamma(\mathcal{CCC}(G)) = \gamma(K_{\frac{n(m - 1)}{2}}) + \gamma(K_n) = \left\lceil\frac{(mn - n - 6)(mn - n - 8)}{48}\right\rceil + \left\lceil\frac{(n - 3)(n - 4)}{12}\right\rceil.
 \]
 Clearly $\gamma(\mathcal{CCC}(G)) = 0$ if $n=3, m=3$ or $n=4, m=3$. $\gamma(\mathcal{CCC}(G)) = 1$ if $n=3, m=5$; $\gamma(\mathcal{CCC}(G)) = 2$ if $n=4, m=5$ or $n=5, m=3$ or $n=6, m=3$ or $n=7, m=3$; $\gamma(\mathcal{CCC}(G)) = 3$ if $n=3,m= 7$. If $n = 3$ and $m\geq 9$ then
 \[
\frac{(mn - n - 6)(mn - n - 8)}{48} = \frac{(m - 3)(3m - 11)}{16} \geq 6.
\]
Therefore
\[
\gamma(\mathcal{CCC}(G)) = \left\lceil\frac{(mn - n - 6)(mn - n - 8)}{48}\right\rceil + \left\lceil\frac{(n - 3)(n - 4)}{12}\right\rceil \geq 6.
\]
If $n = 4$ and $m \geq 7$ then
\[
\frac{(mn - n - 6)(mn - n - 8)}{48} = \frac{(2m - 5)(m - 3)}{6} \geq 6.
\]
Therefore
 \[
\gamma(\mathcal{CCC}(G)) = \left\lceil\frac{(mn - n - 6)(mn - n - 8)}{48}\right\rceil + \left\lceil\frac{(n - 3)(n - 4)}{12}\right\rceil \geq 6.
\]
If $n = 5$ and $m \geq 5$ then
\[
\frac{(mn - n - 6)(mn - n - 8)}{48} = \frac{(5m - 11)(5m - 13)}{48} \geq \frac{7}{2} = 3.5 \text{ \, and \,} \frac{(n - 3)(n - 4)}{12} = \frac{1}{6}.
\]
Therefore
 \[
\gamma(\mathcal{CCC}(G)) = \left\lceil\frac{(mn - n - 6)(mn - n - 8)}{48}\right\rceil + \left\lceil\frac{(n - 3)(n - 4)}{12}\right\rceil \geq 5.
\]
If $n = 6$ and $m \geq 5$ then
\[
\frac{(mn - n - 6)(mn - n - 8)}{48} = \frac{(m - 2)(3m - 7)}{4} \geq 6 \text{ \, and \,} \frac{(n - 3)(n - 4)}{12} = \frac{1}{2}.
\]
Therefore
 \[
\gamma(\mathcal{CCC}(G)) = \left\lceil\frac{(mn - n - 6)(mn - n - 8)}{48}\right\rceil + \left\lceil\frac{(n - 3)(n - 4)}{12}\right\rceil \geq 7.
\]
If $n = 7$ and $m \geq 5$ then
\[
\frac{(mn - n - 6)(mn - n - 8)}{48} = \frac{(7m - 13)(7m - 15)}{48} \geq \frac{55}{6} \text{ \, and \,} \frac{(n - 3)(n - 4)}{12} = 1.
\]
Therefore
 \[
\gamma(\mathcal{CCC}(G)) = \left\lceil\frac{(mn - n - 6)(mn - n - 8)}{48}\right\rceil + \left\lceil\frac{(n - 3)(n - 4)}{12}\right\rceil \geq 11.
\]
If $n \geq 8$ and $m \geq 3$ then
\[
\frac{(mn - n - 6)(mn - n - 8)}{48} \geq \frac{(8(m - 1) - 6)(8(m - 1) - 7)}{48} \geq \frac{15}{8} \text{ \, and \,} \frac{(n - 3)(n - 4)}{12} = \frac{5}{3}.
\]
Therefore
 \[
\gamma(\mathcal{CCC}(G)) = \left\lceil\frac{(mn - n - 6)(mn - n - 8)}{48}\right\rceil + \left\lceil\frac{(n - 3)(n - 4)}{12}\right\rceil \geq 4.
\]
Thus $\mathcal{CCC}(G)$ is planar if and only if $n=3, m=3$ or $n=4, m=3$; toroidal if and only if $n=3, m=5$; double-toroidal if and only if $n=4, m=5$ or $n=5, m=3$ or $n=6, m=3$ or $n=7, m=3$; triple-toroidal if and only if $n=3,m= 7$.

\noindent \textbf{Case 2.} $m$ is even.

By \cite[Proposition 2.3]{sA2020} we have $\mathcal{CCC}(G) =   K_{\frac{n(m - 2)}{2}} \sqcup 2K_n$.

\noindent \textbf{Sub case 2.1} $n=2.$

 If $n=2$ then we have $\mathcal{CCC}(G) =   K_{m - 2} \sqcup 2K_2$. Therefore, for $m = 2, 4$ it follows that $\mathcal{CCC}(G) = 2K_2$ and $3K_2$; and hence $\mathcal{CCC}(G)$ is planar. For $m\geq 6$, by Lemma \ref{kmn}, we have 
 \[
 \gamma(\mathcal{CCC}(G)) = \gamma(K_{m - 2}) = \left\lceil\frac{(m - 5)(m - 6)}{12}\right\rceil.
 \]
Clearly $\gamma(\mathcal{CCC}(G)) = 0$ if $m = 6$; $\gamma(\mathcal{CCC}(G)) = 1$ if $m = 8$; $\gamma(\mathcal{CCC}(G)) = 2$ if $m = 10$. For $m\geq 12$ we have 
\[
\frac{(m - 5)(m - 6)}{12} \geq \frac{7}{2} = 3.5
\]
and so 
 \[
  \gamma(\mathcal{CCC}(G)) = \left\lceil\frac{(m - 4)(m - 5)}{12}\right\rceil \geq 4.
 \]
Thus $\mathcal{CCC}(G)$ is planar if and only if $m = 2,4,6$; toroidal if and only if $m=8$; double-toroidal if and only if $m=10$.

\noindent \textbf{Sub case 2.2} $n\geq 3.$

 If $n\geq 3$ then we have $\mathcal{CCC}(G) = K_{\frac{n(m - 2)}{2}} \sqcup 2K_n$. By Lemma \ref{kmn}, we have 
 \[
 \gamma(\mathcal{CCC}(G)) = \gamma(K_{\frac{n(m - 2)}{2}}) + \gamma(2K_n) = \left\lceil\frac{(mn - 2n - 6)(mn - 2n - 8)}{48}\right\rceil + 2 \left\lceil\frac{(n - 3)(n - 4)}{12}\right\rceil.
 \]
 Clearly $\gamma(\mathcal{CCC}(G)) = 0$ if $n=3, m=2,4$ or $n=4, m=2,4$. $\gamma(\mathcal{CCC}(G)) = 1$ if $n=3, m=6$; $\gamma(\mathcal{CCC}(G)) = 2$ if $n=4, m=6$ or $n=5, m=2$ or $n=6, m=2$ or $n=7, m=2$; $\gamma(\mathcal{CCC}(G)) = 3$ if $n=3,m = 8$ or $n = 5, m = 4$ or $n = 6, m = 4$ or $n = 7, m = 4$. If $n = 3$ and $m\geq 10$ then
 \[
\frac{(mn - 2n - 6)(mn - 2n - 8)}{48} = \frac{(m - 4)(3m - 14)}{16} \geq 6.
\]
Therefore
\[
\gamma(\mathcal{CCC}(G)) = \left\lceil\frac{(mn - 2n - 6)(mn - 2n - 8)}{48}\right\rceil + 2 \left\lceil\frac{(n - 3)(n - 4)}{12}\right\rceil \geq 6.
\]
If $n = 4$ and $m \geq 8$ then
 \[
\frac{(mn - 2n - 6)(mn - 2n - 8)}{48} = \frac{(m - 4)(2m - 7)}{6} \geq 6.
\]
Therefore
\[
\gamma(\mathcal{CCC}(G)) = \left\lceil\frac{(mn - 2n - 6)(mn - 2n - 8)}{48}\right\rceil + 2 \left\lceil\frac{(n - 3)(n - 4)}{12}\right\rceil \geq 6.
\]
If $n = 5$ and $m \geq 6$ then
 \[
\frac{(mn - 2n - 6)(mn - 2n - 8)}{48} = \frac{(5m - 16)(5m - 18)}{48} \geq \frac{7}{2} = 3.5 \text{ \, and \,} \frac{(n - 3)(n - 4)}{12} = \frac{1}{6}.
\]
Therefore
\[
\gamma(\mathcal{CCC}(G)) = \left\lceil\frac{(mn - 2n - 6)(mn - 2n - 8)}{48}\right\rceil + 2 \left\lceil\frac{(n - 3)(n - 4)}{12}\right\rceil \geq 6.
\]
If $n = 6$ and $m \geq 6$ then
 \[
\frac{(mn - 2n - 6)(mn - 2n - 8)}{48} = \frac{(m - 3)(3m - 10)}{4} \geq 6 \text{ \, and \,} \frac{(n - 3)(n - 4)}{12} = \frac{1}{6}.
\]
Therefore
\[
\gamma(\mathcal{CCC}(G)) = \left\lceil\frac{(mn - 2n - 6)(mn - 2n - 8)}{48}\right\rceil + 2 \left\lceil\frac{(n - 3)(n - 4)}{12}\right\rceil \geq 8.
\]
If $n = 7$ and $m \geq 6$ then
 \[
\frac{(mn - 2n - 6)(mn - 2n - 8)}{48} = \frac{(7m - 20)(7m - 22)}{48} \geq \frac{55}{6} \text{ \, and \,} \frac{(n - 3)(n - 4)}{12} = 1.
\]
Therefore
\[
\gamma(\mathcal{CCC}(G)) = \left\lceil\frac{(mn - 2n - 6)(mn - 2n - 8)}{48}\right\rceil + 2  \left\lceil\frac{(n - 3)(n - 4)}{12}\right\rceil \geq 12.
\]
If $n = 8$ and $m \geq 2$ then
\[
\frac{(n - 3)(n - 4)}{12} = \frac{5}{3} \text{ \, and \,} \left\lceil\frac{(mn - 2n - 6)(mn - 2n - 8)}{48}\right\rceil \geq 0.
\]
Therefore
\[
\gamma(\mathcal{CCC}(G)) = \left\lceil\frac{(mn - 2n - 6)(mn - 2n - 8)}{48}\right\rceil + 2  \left\lceil\frac{(n - 3)(n - 4)}{12}\right\rceil \geq 4.
\]
Thus $\mathcal{CCC}(G)$ is planar if and only if $n=3, m=2,4$ or $n=4, m=2,4$; toroidal if and only if $n=3, m=6$; double-toroidal if and only if $n=4, m=6$ or $n=5, m=2$ or $n=6, m=2$ or $n=7, m=2$; triple-toroidal if and only if $n=3,m = 8$ or $n = 5, m = 4$ or $n = 6, m = 4$ or $n = 7, m = 4$. Hence the result follows.
\end{proof}
\begin{theorem}
Let $G=G(p,m,n)$. Then
\begin{enumerate}
\item $\mathcal{CCC}(G)$ is planar if and only if $n=1, m=1, p= 2,3,5$; $n=1, m=2, p=2$; $n=1, m=3, p=2$; $n=2, m=1, p=2$; $n=2, m=2, p=2$; or $n=3, m=1, p=2$.
\item $\mathcal{CCC}(G)$ is not toroidal.
\item $\mathcal{CCC}(G)$ is double-toroidal if and only if $n=2, m=1, p=3$.
\item $\mathcal{CCC}(G)$ is not triple-toroidal.
\item $\gamma(\mathcal{CCC}(G))= \begin{cases}
(p + 1)  \left\lceil\frac{(p - 4)(p - 5)}{12}\right\rceil, & \text{if $n=1, m=1, p\geq 7$}\vspace{.2cm}\\

\vspace{.2cm}

(p + 1)  \left\lceil\frac{(p^2 - p - 3)(p^2 - p - 4)}{12}\right\rceil, & \text{if $n=1, m=2, p\geq 3$}\\

\vspace{.2cm}

(p + 1)  \left\lceil\frac{(p^3 - p^2 - 3)(p^3 - p^2 - 4)}{12}\right\rceil, & \text{if $n=1, m=3, p\geq 3$}\\
(p + 1)  \left\lceil\frac{(p^m - p^{m - 1} - 3)(p^m - p^{m - 1} - 4)}{12}\right\rceil, & \text{if $n=1, m \geq 3, p\geq 2$}\vspace{.2cm}\\
(p^2 - p)  \left\lceil\frac{(p - 4)(p - 5)}{12}\right\rceil + 2  \left\lceil\frac{(p^2 - p - 3)(p^2 - p - 4)}{12}\right\rceil, & \text{if $n=2, m=1, p\geq 5$} \vspace{.4cm}\\

(p^2 - 2)  \left\lceil\frac{(p^2 - p - 3)(p^2 - p - 4)}{12}\right\rceil \\
\quad \quad \quad + 2  \left\lceil\frac{(p^3 - p^2 - 3)(p^3 - p^2 - 4)}{12}\right\rceil, & \text{if $n=2, m=2, p\geq 3$}\vspace{.4cm}\\

(p^2 - 2)  \left\lceil\frac{(p^{m - 1}(p - 1) - 3)(p^{m - 1}(p - 1) - 4)}{12}\right\rceil\\
\quad \quad \quad + 2  \left\lceil\frac{(p^m(p - 1) - 3)(p^m(p - 1) - 4)}{12}\right\rceil, & \text{if $n=2, m\geq 3, p\geq 2$}\vspace{.4cm}\\

p^2(p - 1)  \left\lceil\frac{(p - 4)(p - 5)}{12}\right\rceil + 2  \left\lceil\frac{(p^3 - p^2 - 3)(p^3 - p^2 - 4)}{12}\right\rceil, & \text{if $n=3, m=1, p\geq 3$}\vspace{.2cm}\\

4, & \text{if $n=3, m=2, p=2$}\vspace{.4cm}\\

p^2(p - 1)  \left\lceil\frac{(p^2 - p - 3)(p^2 - p - 4)}{12}\right\rceil\\
\quad \quad \quad + 2  \left\lceil\frac{(p^4 - p^3 - 3)(p^4 - p^3 - 4)}{12}\right\rceil, & \text{if $n=3, m\geq 2, p\geq 3$} \vspace{.4cm}\\

2  \left\lceil\frac{(p^{n - 1}(p^m - p^{m - 1}) - 3)(p^{n - 1}(p^m - p^{m - 1}) - 4)}{12}\right\rceil, & \text{if $n\geq 4, m\geq 1, p\geq 2$}\\
&\text{ and $p^m - p^{m - 1} \leq 2$}\\
(p^n - p^{n - 1})   \left\lceil\frac{(p^m - p^{m - 1} - 3)(p^m - p^{m - 1} - 4)}{12}\right\rceil \\ 
\quad \quad \quad + 2  \left\lceil\frac{(p^{n - 1}(p^m - p^{m - 1}) - 3)(p^{n - 1}(p^m - p^{m - 1}) - 4)}{12}\right\rceil, & \text{if $n\geq 4, m\geq 1, p\geq 2$}\\
&\text{ and $p^m - p^{m - 1} \geq 3$}.\\
\end{cases}$
\end{enumerate}
\end{theorem}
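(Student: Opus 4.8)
The plan is to follow exactly the template used for the previous five families: first read off the isomorphism type of $\mathcal{CCC}(G(p,m,n))$ as a disjoint union of complete graphs from the structural description of Ashrafi and Salahshour \cite{sA2020}, and then evaluate its genus termwise using Lemma \ref{kmn} together with the closed form \eqref{kn} for $\gamma(K_s)$. The governing dichotomy is whether $n=1$ or $n\geq 2$: in the first case $\mathcal{CCC}(G)$ is a single family $(p+1)K_{p^m-p^{m-1}}$ of equal cliques, while in the second it splits into two clique sizes, a large multiplicity $(p^n-p^{n-1})$ of copies of $K_{p^m-p^{m-1}}$ together with exactly two copies of the larger graph $K_{p^{n-1}(p^m-p^{m-1})}$. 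Since $\gamma$ is additive over such disjoint unions, each case reduces to a finite bookkeeping of values $\gamma(K_s)$.

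For $n=1$ I would compute $\gamma(\mathcal{CCC}(G))=(p+1)\gamma(K_{p^m-p^{m-1}})$ directly from Lemma \ref{kmn}. The only subtlety is that $\gamma(K_s)=0$ precisely when $s\leq 4$, so the planar sub-cases are exactly those triples for which $p^{m-1}(p-1)\leq 4$; enumerating these gives the list in part (a) with $n=1$. For the remaining triples the clique has order at least $5$, whence $\gamma(K_{p^m-p^{m-1}})\geq 1$ and $\gamma(\mathcal{CCC}(G))\geq p+1\geq 3$; a short monotonicity argument in $p$ and $m$ then shows the genus is never $1$ or $3$ here, which yields assertions (b) and (d) in this case and the first block of formulas in (e).

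For $n\geq 2$ I would again invoke Lemma \ref{kmn} to write
\[
\gamma(\mathcal{CCC}(G))=(p^n-p^{n-1})\,\gamma\!\left(K_{p^m-p^{m-1}}\right)+2\,\gamma\!\left(K_{p^{n-1}(p^m-p^{m-1})}\right),
\]
and then split according to whether the small cliques contribute, i.e.\ whether $p^m-p^{m-1}\leq 2$ or $p^m-p^{m-1}\geq 3$ (the intermediate value $3$ being unattainable for $p^{m-1}(p-1)$). The small-parameter region I would settle by an explicit table, evaluating both $\gamma(K_s)$ terms through \eqref{kn}; this produces the finitely many planar, toroidal, double-toroidal and triple-toroidal triples recorded in (a)--(d), including the isolated exception $n=3,m=2,p=2$ whose genus equals $4$. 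For all larger triples I would bound each surviving term below via \eqref{kn} and the estimate $(s-3)(s-4)/12$, showing the total already exceeds $3$; because the larger clique always occurs with multiplicity two, its contribution is at least $2\gamma(K_5)=2$ as soon as it has order $\geq 5$, which is the mechanism that forces the genus to skip the values $1$ and $3$ and hence gives the non-toroidality (b) and non-triple-toroidality (d).

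The main obstacle is not any single computation but the combinatorial control of a three-parameter boundary: one must (i) characterise correctly the finitely many $(p,m,n)$ for which one or both of the relevant complete graphs is too small to add a handle, (ii) pin down sharp thresholds (the inequalities of the shape $\gamma\geq 4$) separating the listed low-genus triples from the generic formula in (e), and (iii) verify the gap phenomenon that no admissible triple outside the explicit list realises genus $1$ or $3$. I expect step (ii) to be the most delicate, since the two $\gamma(K_s)$ terms grow at different rates in $m$ and in $n$, so the threshold between ``triple-toroidal or less'' and the general formula must be checked separately along each of the boundary rays $n=1,2,3$ and $m=1,2$ before the uniform estimate for $n\geq 4$ (with its further split on $p^m-p^{m-1}\leq 2$) takes over.
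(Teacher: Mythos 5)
Your proposal follows essentially the same route as the paper: it uses the Salahshour--Ashrafi decomposition of $\mathcal{CCC}(G(p,m,n))$ into $(p^n-p^{n-1})K_{p^m-p^{m-1}}\sqcup 2K_{p^{n-1}(p^m-p^{m-1})}$ (collapsing to $(p+1)K_{p^{m-1}(p-1)}$ when $n=1$), then applies the additivity lemma and the formula for $\gamma(K_s)$, and finishes with the same boundary-case bookkeeping along $n=1,2,3$ and small $m,p$ before the uniform $n\geq 4$ estimate with the split on $p^m-p^{m-1}\leq 2$ versus $\geq 3$. The deferred computations are exactly those carried out in the paper, so the plan is sound and matches the published argument.
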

\begin{proof}

By \cite[Proposition 2.6]{sA2020} we have 
\[
\mathcal{CCC}(G) = (p^n - p^{n - 1})K_{p^{m - n}(p^n - p^{n - 1})} \sqcup K_{p^{n - 1}(p^m - p^{m - 1})} \sqcup K_{p^{m - 1}(p^n - p^{n - 1})}.
\]

Consider the following cases.

\noindent \textbf{Case 1.} $n = 1$.

We have $\mathcal{CCC}(G) = (p + 1)K_{p^{m - 1}(p - 1)}$. For $m = 1$ and $p = 2, 3$, it follows that $\mathcal{CCC}(G) = 2K_1$ or $3K_2$ which is planar. If $m = 1$ and $p\geq 5$, by Lemma \ref{kmn} and \eqref{kn}, we have 
\[
 \gamma(\mathcal{CCC}(G)) = (p + 1)\gamma(K_{p - 1}) =  (p + 1)  \left\lceil\frac{(p - 4)(p - 5)}{12}\right\rceil.
\]
Clearly $\gamma(\mathcal{CCC}(G)) = 0$ for $p = 5$. If $p \geq 7$ then
\[
\frac{(p - 4)(p - 5)}{12} \geq \frac{1}{2}
\]
and so
\[
\gamma(\mathcal{CCC}(G)) = (p + 1)  \left\lceil\frac{(p - 4)(p - 5)}{12}\right\rceil \geq 8.
\]
If $m = 2$ and $p = 2$ then $\gamma(\mathcal{CCC}(G)) = 3\gamma(K_{2}) = 0$. For $m = 2$ and $p\geq 3$, by Lemma \ref{kmn} and \eqref{kn}, we have
\[
\gamma(\mathcal{CCC}(G)) = (p+1) \gamma(K_{p(p - 1)}) = (p+1)  \left\lceil\frac{(p^2 - p - 3)(p^2 - p - 4)}{12}\right\rceil.
\]
If $p \geq 3$ then 
\[
\frac{(p^2 - p - 3)(p^2 - p - 4)}{12} \geq \frac{1}{2}
\]
and so 
\[
\gamma(\mathcal{CCC}(G)) = (p + 1)  \left\lceil\frac{(p^2 - p - 3)(p^2 - p - 4)}{12}\right\rceil \geq 4.
\]
If $m = 3$ then $\gamma(\mathcal{CCC}(G)) = (p+1) \gamma(K_{p^2(p - 1)})$. Therefore, if $m = 3$ and $p\geq 2$ then by Lemma \ref{kmn} and \eqref{kn}, we have
\[
\gamma(\mathcal{CCC}(G)) = (p+1) \gamma(K_{p^2(p - 1)}) = (p+1)  \left\lceil\frac{(p^3 - p^2 - 3)(p^3 - p^2 - 4)}{12}\right\rceil.
\]
Clearly if $m = 3$ and $p = 2$ then $\gamma(\mathcal{CCC}(G)) = 0$. If $p \geq 3$ then 
\[
\frac{(p^3 - p^2 - 3)(p^3 - p^2 - 4)}{12} \geq \frac{35}{2}
\]
and so
\[
\gamma(\mathcal{CCC}(G)) = (p + 1)  \left\lceil\frac{(p^3 - p^2 - 3)(p^3 - p^2 - 4)}{12}\right\rceil \geq 72.
\]
If $m \geq 4$ and $p \geq 2$ then $\gamma(\mathcal{CCC}(G)) = (p+1) \gamma(K_{p^{m - 1}(p - 1)})$. Therefore,  by Lemma \ref{kmn} and \eqref{kn}, we have
\[
\gamma(\mathcal{CCC}(G)) = (p+1) \gamma(K_{p^{m - 1}(p - 1)}) = (p+1)  \left\lceil\frac{(p^m - p^{m - 1} - 3)(p^m - p^{m - 1} - 4)}{12}\right\rceil.
\]
We have 
\[
\frac{(p^m - p^{m - 1} - 3)(p^m - p^{m - 1} - 4)}{12} \geq \frac{20}{12}
\]
and so
\[
\gamma(\mathcal{CCC}(G)) = (p+1)  \left\lceil\frac{(p^m - p^{m - 1} - 3)(p^m - p^{m - 1} - 4)}{12}\right\rceil \geq 6.
\]
Therefore, $\mathcal{CCC}(G)$ is planar if and only if  $n=1, m=1, p= 2,3,5$; $n=1, m=2, p=2$; or $n=1, m=3, p=2$. Also, in this case, $\mathcal{CCC}(G)$ is neither toroidal, double-toridal nor triple-toroidal.

\newpage
\noindent \textbf{Case 2.} $n = 2$.

We have $\mathcal{CCC}(G) = (p^2 - p)K_{p^{m - 1}(p - 1)} \sqcup 2K_{p^m(p - 1)}$. For $m = 1$ and $p = 2$, it follows that $\mathcal{CCC}(G) = 2K_1 \sqcup 2K_2$ which is planar.  If $m = 1$ and $p = 3$ then, by Lemma \ref{kmn} and \eqref{kn}, we have 
\[
 \gamma(\mathcal{CCC}(G)) = 2\gamma(K_6) = 2.
\]
If $m = 1$ and $p\geq 5$, by Lemma \ref{kmn} and \eqref{kn}, we have 
\begin{align*}
 \gamma(\mathcal{CCC}(G)) &= (p^2 - p)\gamma(K_{p - 1}) + 2\gamma(K_{p(p - 1)}) \\
 & = (p^2 - p)  \left\lceil\frac{(p - 4)(p - 5)}{12}\right\rceil + 2  \left\lceil\frac{(p^2 - p - 3)(p^2 - p - 4)}{12}\right\rceil.
\end{align*}
If $p \geq 5$ then
\[
\frac{(p^2 - p - 3)(p^2 - p - 4)}{12} \geq \frac{68}{3}
\]
and so
\[
\gamma(\mathcal{CCC}(G)) \geq 2  \left\lceil\frac{(p^2 - p - 3)(p^2 - p - 4)}{12}\right\rceil \geq 46.
\]
If $m = 2$ and $p \geq 2$ then $\mathcal{CCC}(G) = (p^2 - p)K_{p(p - 1)} \sqcup 2K_{p^2(p - 1)}$. Therefore, if $p = 2$ then $\mathcal{CCC}(G) = 2K_2 \sqcup 2K_4$ hence by \eqref{kn} we have $\gamma(\mathcal{CCC}(G)) = 2\gamma(K_4) = 0$. If $p\geq 3$, by Lemma \ref{kmn} and \eqref{kn}, we have
\begin{align*}
\gamma(\mathcal{CCC}(G)) &= (p^2 - p) \gamma(K_{p(p - 1)}) + 2 \gamma(K_{p^2(p - 1)}) \\
&= (p^2 - 2)  \left\lceil\frac{(p^2 - p - 3)(p^2 - p - 4)}{12}\right\rceil + 2  \left\lceil\frac{(p^3 - p^2 - 3)(p^3 - p^2 - 4)}{12}\right\rceil.
\end{align*}
Also,
$
\frac{(p^3 - p^2 - 3)(p^3 - p^2 - 4)}{12} \geq \frac{35}{2}
$
and so 
\[
\gamma(\mathcal{CCC}(G)) \geq
2  \left\lceil\frac{(p^3 - p^2 - 3)(p^3 - p^2 - 4)}{12}\right\rceil \geq 36.
\]
If $m \geq 3$ and $p\geq 2$ then
\begin{align*}
 \gamma(\mathcal{CCC}&(G)) = (p^2 - p)\gamma(K_{p^{m - 1}(p - 1)}) + 2\gamma(K_{p^m(p - 1)}) \\
 & = (p^2 - p)  \left\lceil\frac{(p^{m - 1}(p - 1) - 3)(p^{m - 1}(p - 1) - 4)}{12}\right\rceil + 2  \left\lceil\frac{(p^m(p - 1) - 3)(p^m(p - 1) - 4)}{12}\right\rceil.
\end{align*}
Therefore, $\mathcal{CCC}(G)$ is planar if and only if $n=2, m=1, p=2$; $n=2, m=2, p=2$; or $n=3; m=1; p=2$ and double-toroidal if and only if $n=2, m=1, p=3$. In this case, $\mathcal{CCC}(G)$ is neither toroidal nor triple-toroidal. 

\noindent \textbf{Case 3.} $n = 3$.

We have $\mathcal{CCC}(G) = p^2(p-1)K_{p^{m - 1}(p- 1)} \sqcup 2K_{p^{m+1}(p-1)}$. If $m = 1$ and $p = 2$ then $\mathcal{CCC}(G) = 4K_1 \sqcup 2K_4$, and so by \eqref{kn} $\gamma(\mathcal{CCC}(G)) = 2\gamma(K_4) = 0$. For $p = 3$ we have $\mathcal{CCC}(G) = 18K_2 \sqcup 2K_{18}$. Therefore,  by \ref{kmn} and \eqref{kn} we have $\gamma(\mathcal{CCC}(G)) = 2\gamma(K_{18}) = 36$. For $m = 1$ and $p \geq 3$, by Lemma \ref{kmn} and \eqref{kn} we have
\begin{align*}
\gamma(\mathcal{CCC}(G)) &= p^2(p - 1) \gamma(K_{p - 1}) + 2 \gamma(K_{p^2(p - 1)}) \\
&= p^2(p - 1)  \left\lceil\frac{(p - 4)(p - 5)}{12}\right\rceil + 2  \left\lceil\frac{(p^3 - p^2 - 3)(p^3 - p^2 - 4)}{12}\right\rceil.
\end{align*}
If $p\geq 5$ then
\[
\frac{(p^3 - p^2 - 3)(p^3 - p^2 - 4)}{12} = 776,
\]
and so 
\[
\gamma(\mathcal{CCC}(G)) \geq 2  \left\lceil\frac{(p^3 - p^2 - 3)(p^3 - p^2 - 4)}{12}\right\rceil \geq 1552.
\]
If $m = 2$ and $p = 2$ then we have $\mathcal{CCC}(G) = 4K_2 \sqcup 2K_8$. By Lemma \ref{kmn} and \eqref{kn} we have
\[
\gamma(\mathcal{CCC}(G)) = 2 \gamma(K_8) = 4.
\]
If $m = 2$ and $p\geq 3$ then we have $\mathcal{CCC}(G) = p^2(p-1)K_{p(p- 1)} \sqcup 2K_{p^3(p-1)}$. By Lemma \ref{kmn} and \eqref{kn} we have
\begin{align*}
\gamma(\mathcal{CCC}(G)) &= p^2(p - 1) \gamma(K_{p(p - 1)}) + 2 \gamma(K_{p^3(p-1)}) \\
&= p^2(p - 1)  \left\lceil\frac{(p^2 - p - 3)(p^2 - p - 4)}{12}\right\rceil + 2  \left\lceil\frac{(p^4 - p^3 - 3)(p^4 - p^3 - 4)}{12}\right\rceil.
\end{align*}
If $p \geq 3$ then
\[
\frac{(p^2 - p - 3)(p^2 - p - 4)}{12} \geq \frac{1}{2}
\]
and so
\[
\gamma(\mathcal{CCC}(G)) > p^2(p - 1)  \left\lceil\frac{(p^2 - p - 3)(p^2 - p - 4)}{12}\right\rceil \geq 18.
\]
If $m \geq 3$ and $p\geq 2$ then we have $\mathcal{CCC}(G) = p^2(p-1)K_{p^{m - 1}(p- 1)} \sqcup 2K_{p^{m+1}(p-1)}$. By Lemma \ref{kmn} and \eqref{kn} we have
\begin{align*}
\gamma(\mathcal{CCC}(G)) &= p^2(p - 1) \gamma(K_{p^{m - 1}(p- 1)}) + 2 \gamma(K_{p^{m+1}(p-1)}) \\
&= p^2(p - 1)  \left\lceil\frac{(p^{m - 1}(p- 1) - 3)(p^{m - 1}(p- 1) - 4)}{12}\right\rceil \\
& \quad \quad \quad \quad \quad \quad \quad + 2  \left\lceil\frac{(p^{m+1}(p-1) - 3)(p^{m+1}(p-1) - 4)}{12}\right\rceil.
\end{align*}
We have
\[
\frac{(p^{m+1}(p-1) - 3)(p^{m+1}(p-1) - 4)}{12} \geq 13
\]
and so
\[
\gamma(\mathcal{CCC}(G)) \geq 2  \left\lceil\frac{(p^{m+1}(p-1) - 3)(p^{m+1}(p-1) - 4)}{12}\right\rceil \geq 26.
\]
Therefore, $\mathcal{CCC}(G)$ is planar if and only if $n=3, m=1, p=2$. Also, in this case, $\mathcal{CCC}(G)$ is neither toroidal, double-toridal nor triple-toroidal.

\noindent \textbf{Case 4.} $n \geq 4$.

We have 
\[
\mathcal{CCC}(G) = (p^n - p^{n - 1})K_{p^m - p^{m - 1}} \sqcup 2K_{p^{n - 1}(p^m - p^{m - 1})}.
\]
Therefore, by Lemma \ref{kmn}, we have
\begin{equation}\label{1}
\gamma(\mathcal{CCC}(G)) = (p^n - p^{n - 1})\gamma(K_{p^m - p^{m - 1}}) + 2\gamma(K_{p^{n - 1}(p^m - p^{m - 1})})
\end{equation}
For $m\geq 1$ and $p\geq 2$ we have 
\[
\gamma(K_{p^{n - 1}(p^m - p^{m - 1})}) \geq \gamma(K_{p^{n - 1}}) \geq \gamma(K_8) = 2,
\] 
noting that $K_8$ and $K_{p^{n - 1}}$ are subgraphs of $K_{p^{n - 1}}$ and $K_{p^{n - 1}(p^m - p^{m - 1})}$ respectively. Therefore
\begin{align*}
\gamma(\mathcal{CCC}(G)) \geq 2\gamma(K_{p^{n - 1}(p^m - p^{m - 1})}) \geq 4.
\end{align*}
Further, if $p^m - p^{m - 1} \leq 2$ then, by \eqref{1} and \eqref{kn}, we have
\begin{align*}
\gamma(\mathcal{CCC}(G)) &= 2\gamma(K_{p^{n - 1}(p^m - p^{m - 1})})\\
& = 2  \left\lceil\frac{(p^{n - 1}(p^m - p^{m - 1}) - 3)(p^{n - 1}(p^m - p^{m - 1}) - 4)}{12}\right\rceil.
\end{align*}
If $p^m - p^{m - 1} \geq 3$ then, by \eqref{1} and \eqref{kn}, we have
\begin{align*}
\gamma(\mathcal{CCC}(G)) 
& = (p^n - p^{n - 1})   \left\lceil\frac{(p^m - p^{m - 1} - 3)(p^m - p^{m - 1} - 4)}{12}\right\rceil +\\
& \quad \quad \quad \quad 2  \left\lceil\frac{(p^{n - 1}(p^m - p^{m - 1}) - 3)(p^{n - 1}(p^m - p^{m - 1}) - 4)}{12}\right\rceil.
\end{align*}
Hence the result follows.
\end{proof}
We conclude this paper with the following characterization of $\mathcal{CCC}(G)$.
\begin{corollary}
Let $G = D_{2n}, SD_{8n}, Q_{4m}, V_{8n}$, $U_{(n, m)}$ or $G(p, m, n)$. Then
\begin{enumerate}
\item $\mathcal{CCC}(G)$ is planar if and only if $G = D_{6}, D_{8}, D_{10}, D_{12}, D_{14}, D_{16}, D_{18}, D_{20}, SD_{16}, SD_{24}, Q_{8}$, $Q_{12}, Q_{16}, Q_{20}, V_{16}, U_{(2, 2)}, U_{(2, 3)}, U_{(2, 4)}, U_{(2, 5)}, U_{(2, 6)}, U_{(3, 2)}, U_{(3, 3)}, U_{(3, 4)}, U_{(4, 2)}, U_{(4, 3)}$, \, $U_{(4, 4)}$,  $G(2, 1, 1), G(3, 1, 1), G(5, 1, 1)$,
$G(2, 2, 1), G(2, 3, 1), G(2, 1, 2), G(2, 2, 2)$ or $G(2, 1, 3)$.

\item $\mathcal{CCC}(G)$ is toroidal if and only if $G = D_{22}, D_{24}, D_{26}, D_{28}, D_{30}, D_{32}, SD_{32}, Q_{24}, Q_{28}, Q_{32}, V_{24}$, $V_{32}, U_{(2, 7)}, U_{(2, 8)},  U_{(3, 5)}$ or $U_{(3, 6)}$.
\item $\mathcal{CCC}(G)$ is double-toroidal if and only if $G = D_{34}, D_{36}, SD_{40}, Q_{36}, U_{(2, 9)}, U_{(2, 10)}, U_{(4, 5)}, U_{(4, 6)}$, $U_{(5, 2)}, U_{(5, 3)},  U_{(6, 2)}, U_{(6, 3)}, U_{(7, 2)}, U_{(7, 3)}$ or $G(3, 1, 2)$.
\item $\mathcal{CCC}(G)$ is triple-toroidal if and only if $G = D_{38}, D_{40}, Q_{40}, V_{40}, U_{(3, 7)}, U_{(3, 8)}, U_{(5, 4)}, U_{(6, 4)}$ or $U_{(7, 4)}$.
\end{enumerate}
\end{corollary}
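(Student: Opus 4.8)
The plan is to recognize that this corollary is a pure collation of the six genus theorems established above: each of those theorems already records, for its family of groups, the precise parameter values producing genus $0$, $1$, $2$, or $3$. Thus the proof requires no new embedding or estimate; it amounts to sweeping through all six families, reading off the relevant parameter ranges from parts (1)--(4) of each theorem, and rewriting each admissible parameter value as the corresponding explicit group.

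First I would assemble the planar case, part (1). From the theorem on $D_{2n}$, the range $3\le n\le 10$ yields $D_6, D_8, \dots, D_{20}$; from the theorem on $SD_{8n}$, the values $n=2,3$ yield $SD_{16}, SD_{24}$; from the theorem on $Q_{4m}$, the values $m=2,3,4,5$ yield $Q_8, Q_{12}, Q_{16}, Q_{20}$; from the theorem on $V_{8n}$, the value $n=2$ yields $V_{16}$; and the planar parameter lists for $U_{(n,m)}$ and $G(p,m,n)$ are transcribed directly from part (1) of their respective theorems and converted to group names. I would then repeat this reading-off verbatim for the toroidal, double-toroidal, and triple-toroidal cases, drawing on parts (2), (3), and (4) respectively of each of the six theorems.

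Because every ingredient has already been proven, the only substantive effort is bookkeeping, and the main obstacle is purely organizational: ensuring completeness and correctness of the translation for the two parameter-rich families $U_{(n,m)}$ and $G(p,m,n)$, whose genus-$k$ conditions split into several sub-cases according to the parity of $m$ and to small values of $n$ and $p$. I would cross-check each group named in the statement against the corresponding sub-case to confirm that no admissible value is omitted and none is duplicated. Once this verification is complete for all four genus levels, the four enumerated lists coincide exactly with the assertions of the corollary, and the result follows.
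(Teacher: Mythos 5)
Your proposal is correct and matches the paper exactly: the corollary is a direct collation of parts (1)--(4) of the six preceding theorems, with each admissible parameter value rewritten as the corresponding group, and the paper offers no further argument beyond this translation. The parameter-to-group conversions you spell out ($D_{2n}$ for $3\le n\le 10$ giving $D_6,\dots,D_{20}$, etc.) are the intended content.
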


\noindent {\bf Acknowledgements}
    The first author is thankful to Council of Scientific and Industrial Research  for the fellowship (File No. 09/796(0094)/2019-EMR-I).


\begin{thebibliography}{30}


\bibitem{AF14}
M. Afkhami, M. Farrokhi  D. G. and K. Khashyarmanesh, Planar, toroidal, and projective commuting and non-commuting  graphs, {\em Comm. Algebra} {\bf 43}(7), 2964--2970 (2015).


\bibitem{bN-CCG-1-20}
P. Bhowal and R. K. Nath, \emph{ Spectral aspects of  commuting conjugacy class graph of finite groups}, available at https://arxiv.org/abs/2003.05762.


\bibitem{bN-CCG-2-20}
P. Bhowal and R. K. Nath, \emph{Spectrum  and energies of commuting conjugacy class graph of a finite group}, available at https://arxiv.org/abs/2003.07142. 

\bibitem{bnN20}
P. Bhowal, D. Nongsiang and R. K. Nath, \emph{ Solvable graphs of finite groups}, Hacet. J. Math. Stat.  DOI: 10.15672/hujms.573766.


\bibitem{bF1955}
R. Brauer and K. A. Fowler, On groups of even order, {\em Ann.  Math.}   {\bf 62}(2)  565--583 (1955).



\bibitem{das13}
A. K. Das and D. Nongsiang, On the genus of the commuting  graphs of finite non-abelian groups,  \emph{Int. Electron. J. Algebra} {\bf 19}, 91--109 (2016). 

\bibitem{das2015}
Das, A. K.  and  Nongsiang, D.  On the genus of the nilpotent graphs of finite groups. {\em Comm.  Algebra}  {\bf 43}(12), 5282--5290, (2015).








\bibitem{hLM2009} 
M. Herzog, M. Longobardi  and M. Maj, On a commuting graph on conjugacy classes of groups. {\em  Comm. Algebra} {\bf 37}(10), 3369--3387, (2009).







\bibitem{mefw}
A. Mohammadian,  A. Erfanian, D. G. M. Farrokhi  and  B. Wilkens,  Triangle-free commuting conjugacy class graphs. {\em  J. Group Theory} {\bf 19}, 1049--1061, (2016).


 
 


 

\bibitem{sA2020}
M. A. Salahshour and A. R. Ashrafi, Commuting conjugacy class graph of finite CA-groups, 
{\em Khayyam J. Math.} {\bf 6}(1), 108--118 (2020). 



\bibitem{bhky}
J. Battle,  F. Harary, Y. Kodama  and  J. W. T. Youngs,  Additivity of the genus of a graph. {\em Bull. Amer. Math. Soc.} {\bf 68}(6), 565--568, (1962).

\bibitem{whit}
A. T. White,   {\em Graphs, Groups and Surfaces}, North-Holland Mathematics Studies, no. 8., American Elsevier Publishing Co., Inc.,  New York, 1973.




\end{thebibliography}
\end{document}